\newtheorem{theorem}{Theorem}
\newtheorem{lemma}[theorem]{Lemma}
\newtheorem{definition}{Definition}
\newtheorem{conjecture}{Conjecture}
\newcommand{\RNum}[1]{\uppercase\expandafter{\romannumeral #1\relax}}
\begin{document}
\baselineskip=0.30in

\begin{center}
{\LARGE  \bf \textbf{The number of triangles is more when they have no common vertex}}\\
\vskip 0.3cm
\author\centerline{Chuanqi Xiao \footnote[1]{email:chuanqixm@gmail.com}\\
Central European University, Budapest, Hungary\\
 Gyula O.H. Katona
\footnote[2]{email:katona.gyula.oh@renyi.hu}\\

 MTA R\'{e}nyi Institute, Budapest, Hungary\\}
  \end{center}
  {\bf Abstract}\ \
 By the theorem of Mantel \cite{MAN} it is known that a graph with $n$ vertices and $\lfloor \frac{n^{2}}{4} \rfloor+1$ edges must contain a triangle. A theorem of Erd\H os gives a strengthening: there are not only one, but at least $\lfloor\frac{n}{2}\rfloor$ triangles. We give a further improvement: if there is no vertex contained by all triangles then there are at least $n-2$ of them. There are some natural generalizations when $(a)$ complete graphs are considered (rather than triangles), $(b)$ the graph has $t$ extra edges (not only one) or $(c)$ it is supposed that there are no $s$ vertices such that every triangle contains one of them. We were not able to prove these generalizations, they are posed as conjectures.
 
 \section{Introduction}
 All graphs considered in this paper are finite and simple. Let $G$ be such a graph, the vertex set of $G$ is denoted by $V(G)$, the edge set of $G$ by $E(G)$, the number of vertices in $G$ is $v(G)$ and the number of edges in $G$ is $e(G)$. We denote the degree of a vertex $v$ by $d(v)$, the neighborhood of $v$ by $N(v)$, the number of edges between vertex sets $A$ and $B$ by $e(A,B)$ and the number of triangles in $G$ by $T(G)$. A $triangle~covering~set$ in $V(G)$ is a vertex set that contains at least one vertex of every triangle in $G$. The $triangle~covering~number$, denoted by $\tau_{\triangle}(G)$, is the size of the smallest triangle covering set. Let $S\subset V(G)$ be any subset of $V(G)$, then $G[S]$ is the subgraph induced by $S$.

 Mantel \cite{MAN} proved that an $n$-vertex graph with $\left\lfloor\frac{n^{2}}{4}\right\rfloor+t$ $(t\geq1)$ edges must contain a triangle. In 1941, Rademacher (unpublished, see \cite{E1}) showed that for even $n$, every graph $G$ on $n$ vertices and $\frac{n^2}{4}+1$ edges contains at least $\frac{n}{2}$ triangles and $\frac{n}{2}$ is the best possible. Later on, the problem was revived by Erd\H os, see \cite{E1}, which is now known as the Erd\H os-Rademacher problem, Erd\H os simplified Rademacher's proof and proved more generally that for $t\leq3$ and $n>2t$ case. Seven years later, he \cite{E2}
 conjectured that a graph with $\left\lfloor\frac{n^2}{4}\right\rfloor+t$ edges contains at least $t\left\lfloor\frac{n}{2}\right\rfloor$ triangles if $t<\frac{n}{2}$, which was proved by Lov\'asz and Simonovits \cite{LS}.
  %extended the result to $t<cn$ for some small positive constant $c$ and showed that for every $G$ on $n$ vertices and $\left\lfloor\frac{n^{2}}{4}\right\rfloor+t$ edges contains at least $t\left\lfloor\frac{n}{2}\right\rfloor$ triangles. In 1983,  Lov\'asz and Simonovitz \cite{LS} generalized it for any $t<\frac{n}{2}$.
 Motivated by earlier results, we give a further improvement for the case $t=1$: if there is no vertex contained by all triangles then there are at least $n-2$ of them in $G$.
%we aim to show that the lower bound for the number of triangles in a graph of given order, size and triangle covering number. 

 \begin{theorem}[\textbf{Mantel}\cite{MAN}]\label{MAN} The maximum number of edges in an $n$-vertex triangle-free graph is $\lfloor \frac{n^{2}}{4}\rfloor$. Furthermore, the only triangle-free graph with $\lfloor \frac{n^2}{4}\rfloor$ edges is the complete bipartite graph $K_{\lfloor \frac{n}{2}\rfloor, \lceil\frac{n}{2} \rceil}$.
\end{theorem}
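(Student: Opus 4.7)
The plan is to prove both halves of the statement by a single degree argument centered on a vertex of maximum degree, which is short enough to deliver the uniqueness as a byproduct of tracing the equality cases.

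First I would fix a vertex $v$ with $d(v)=\Delta(G)$, and let $A=N(v)$ and $B=V(G)\setminus A$. Because $G$ is triangle-free, no two vertices in $A$ are adjacent: an edge inside $A$ would together with $v$ form a triangle. Consequently every edge of $G$ has at least one endpoint in $B$, so
\[
e(G)\le \sum_{u\in B} d(u) \le |B|\cdot \Delta = (n-\Delta)\Delta.
\]
The function $x\mapsto(n-x)x$ is maximized at $x=n/2$ with value $n^{2}/4$, so $e(G)\le n^{2}/4$, and therefore $e(G)\le \lfloor n^{2}/4\rfloor$ since $e(G)$ is an integer. This gives the extremal bound.

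For the uniqueness statement, I would examine when the two inequalities above are simultaneously tight. The AM--GM step forces $\Delta=\lceil n/2\rceil$ (and $|B|=\lfloor n/2\rfloor$) up to the parity of $n$, while $e(G)=\sum_{u\in B}d(u)$ forces every edge to have exactly one endpoint in $B$, i.e.\ $B$ is also an independent set. Together with $A$ being independent this means $G$ is bipartite with parts $A$ and $B$, and the equality $\sum_{u\in B}d(u)=|B|\cdot\Delta$ requires every vertex of $B$ to have degree $\Delta=|A|$, i.e.\ to be adjacent to all of $A$. Hence $G=K_{\lfloor n/2\rfloor,\lceil n/2\rceil}$.

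The main obstacle, modest as it is, lies in the uniqueness argument rather than the numerical bound: one must squeeze the equality case of two different inequalities at once (the quadratic maximum and the edge--vertex double count) and keep track of the parity of $n$ so as not to lose the case when $n$ is odd and $\Delta$ could a priori be either $\lfloor n/2\rfloor$ or $\lceil n/2\rceil$. A clean way around this is to observe that for odd $n$ the strict inequality $(n-\Delta)\Delta<n^{2}/4$ already holds unless $\Delta\in\{\lfloor n/2\rfloor,\lceil n/2\rceil\}$, and in either case the second tightness condition forces a complete bipartite structure with the stated part sizes.
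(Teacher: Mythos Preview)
The paper does not supply its own proof of Mantel's theorem: Theorem~\ref{MAN} is quoted from \cite{MAN} as background and is then used as a black box in the proofs of Lemma~\ref{L1} and Theorem~\ref{T_{n}}. So there is no argument in the paper to compare your proposal against.

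That said, your proof is the standard maximum-degree argument and is correct. One small point worth tightening in the uniqueness part: once you have shown that $A$ and $B$ are both independent and that every vertex of $B$ is adjacent to all of $A$, you know $G=K_{|A|,|B|}$ with $|A|+|B|=n$ and $|A|\cdot|B|=\lfloor n^{2}/4\rfloor$, and these two equations alone force $\{|A|,|B|\}=\{\lfloor n/2\rfloor,\lceil n/2\rceil\}$. This sidesteps the parity case split you mention at the end; in fact the possibility $\Delta=\lfloor n/2\rfloor$ for odd $n$ cannot survive, since the resulting complete bipartite graph would contain vertices of degree $\lceil n/2\rceil>\Delta$, contradicting the choice of $v$.
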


\begin{theorem}[\textbf{Erd\H os} \cite{E1}]\label{E1} Let $G$ be a graph with $n$ vertices and $\left\lfloor\frac{n^2}{4}\right\rfloor+t$ edges,  $t\leq3$, $n>2t$, then every $G$ contains at least $t\left\lfloor\frac{n}{2}\right\rfloor$ triangles.
\end{theorem}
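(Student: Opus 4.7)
The plan is to proceed by induction on $n$ with the surplus $t\in\{1,2,3\}$ kept fixed, the base cases being the finitely many graphs with $n$ just above $2t$, which are verified by direct inspection. The key arithmetic identity driving the induction is $\lfloor n^2/4\rfloor-\lfloor n/2\rfloor=\lfloor(n-1)^2/4\rfloor$: removing a vertex of degree at most $\lfloor n/2\rfloor$ preserves the surplus $t$ over Mantel's bound on $n-1$ vertices, which is precisely what allows the inductive hypothesis to be reapplied.

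For the inductive step I would let $v$ be a vertex of minimum degree $\delta$ and split into two regimes. In the low-degree regime $\delta\leq\lfloor n/2\rfloor$, deleting $v$ yields $e(G-v)\geq\lfloor(n-1)^2/4\rfloor+t$, so the inductive hypothesis (after a harmless bootstrap through smaller values of $t$ in case the surplus temporarily jumps above $3$) delivers $T(G-v)\geq t\lfloor(n-1)/2\rfloor$. When $n$ is odd this already equals $t\lfloor n/2\rfloor$ and we are done; when $n$ is even the bound is short by exactly $t$, and these must be exhibited as triangles through $v$. I would recover them from the near-extremal structure of $G-v$: since $G-v$ exceeds Mantel by at most $3$, a stability argument places it within $O(t)$ edge modifications of the Tur\'an graph $K_{\lfloor(n-1)/2\rfloor,\lceil(n-1)/2\rceil}$, and then tracking how $N(v)$ distributes across the induced near-bipartition produces the $t$ required triangles incident to $v$.

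In the high-degree regime $\delta\geq\lfloor n/2\rfloor+1$, every edge $uv$ satisfies $|N(u)\cap N(v)|\geq d(u)+d(v)-n\geq 2\delta-n\geq 2$ (for even $n$; $\geq 1$ for odd $n$), so double-counting triangle--edge incidences gives
\[
3\,T(G) \;=\; \sum_{uv\in E(G)} |N(u)\cap N(v)| \;\geq\; e(G)\,(2\delta-n),
\]
and the right-hand side comfortably exceeds $3t\lfloor n/2\rfloor$ once $n$ is a little larger than $2t$, with the finitely many exceptional small $n$ absorbed into the base cases.

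The principal obstacle will be the even-$n$ subcase of the low-degree regime, where the induction falls short by exactly $t$ triangles and one has to extract them from near-Mantel stability rather than from pure counting. The bound $t\leq 3$ is essential here: it reduces the stability question to a short finite structural analysis (a graph with $\lfloor m^2/4\rfloor+t$ edges and $t\leq 3$ differs from the Tur\'an graph $K_{\lfloor m/2\rfloor,\lceil m/2\rceil}$ by a bounded number of edge edits), but this is the step that cannot be reduced to symbol-pushing and constitutes the technical heart of the argument.
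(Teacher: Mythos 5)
This statement is Erd\H{o}s's theorem, which the paper quotes from \cite{E1} and uses as a black box; the paper contains no proof of it, so there is nothing internal to compare your argument against. Judged on its own, your proposal is a plausible outline of the classical induction-on-$n$ strategy (and your arithmetic identity $\lfloor n^2/4\rfloor-\lfloor n/2\rfloor=\lfloor(n-1)^2/4\rfloor$ is correct and is indeed the right engine for the low-degree step), but it is not a proof: it has a genuine gap exactly where you yourself locate the ``technical heart.'' In the even-$n$ low-degree regime you must produce $t$ additional triangles, and ``a stability argument places $G-v$ within $O(t)$ edge modifications of the Tur\'an graph, and then tracking how $N(v)$ distributes across the induced near-bipartition produces the $t$ required triangles incident to $v$'' is an assertion, not an argument. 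Two concrete problems: first, the quantitative stability statement you invoke (that surplus $\leq 3$ over Mantel forces a bounded number of edge edits from $K_{\lfloor m/2\rfloor,\lceil m/2\rceil}$) itself requires proof and is comparable in difficulty to the theorem; second, even granting it, $N(v)$ may lie entirely inside one class of the near-bipartition, in which case there are \emph{no} triangles through $v$ and the missing $t$ triangles must be found elsewhere in $G-v$ --- your outline gives no mechanism for that case.

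There is a second, quantitative gap in the high-degree regime. For odd $n$ your bound is $3T(G)\geq e(G)\cdot(2\delta-n)\geq \lfloor n^2/4\rfloor+t$, and the requirement $T(G)\geq t\lfloor n/2\rfloor$ with $t=3$ then needs roughly $n\geq 17$; it fails for all odd $n$ with $7\leq n\leq 15$. These values cannot be ``absorbed into the base cases'': your induction on $n$ must pass through them, and each one is a claim about \emph{every} graph on that many vertices with the prescribed edge count, not a finite inspection of a single extremal graph. To make the scheme work you would need either a sharper counting argument in the high-degree case (e.g., summing $d(u)+d(v)-n$ only over edges in a judiciously chosen subgraph) or a separate treatment of the intermediate $n$, and, above all, a genuine argument for the even-$n$ step --- which is essentially where Erd\H{o}s's actual work in \cite{E1} lies.
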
 

Before presenting our main result, the following definitions, a theorem and a lemma are needed.
\begin{definition}
Let $K_{i,n-i}$ denote a the complete bipartite graph on the vertex classes $|X|=i$, $|Y|=n-i$.
\end{definition}

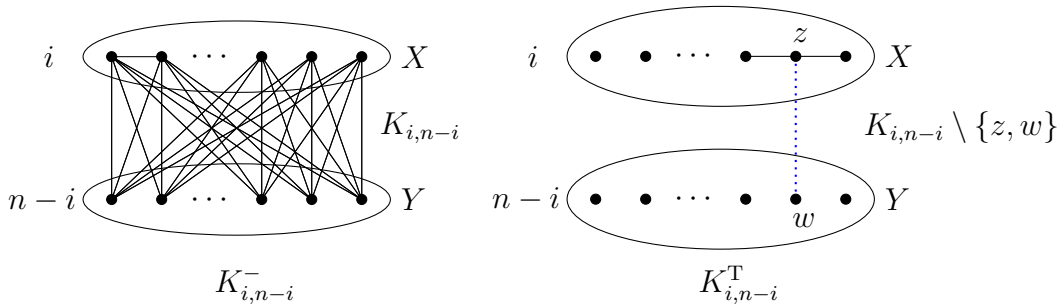
\begin{figure}[ht]
\centering
\begin{tikzpicture}[scale=.95]

\foreach \x in {-.7,-1.4,.7,1.4,2.1}{
\foreach \y in {-.7,-1.4,.7,1.4,2.1}{

\filldraw (\y,0)  -- (\x,-2) ;

}
}

\draw (0,0) node{$\cdots$} (0,-2) node{$\cdots$};

%\filldraw (2.1,0) circle (2pt)  (2.1,-2) circle (2pt);

\draw (-.7,0) -- (-1.4,0)  ;

%\draw (.7,0) node{$\cdots$} (0.7,-2) node{$\cdots$} ;

\draw  (2.5,-2) arc (0:360:2.15cm and .5cm) node[align=center,right]{$Y$};

\draw (-2.5,0) node[right]{$i$};

\draw (-3,-2) node[right]{$n-i$};

\draw (2.2,-1) node[right]{$K_{i,n-i}$};

\draw (2.5,0) arc (0:360:2.15cm and .5cm) node[align=center,right]{$X$};

\draw (0.6,-3.2) node{$K^{-}_{i,n-i}$};

\foreach \x in {-.7,-1.4,.7,1.4,2.1}{

\filldraw (\x,0) circle (2pt)  (\x,-2) circle (2pt);

}

\end{tikzpicture}
\begin{tikzpicture}[scale=.95]

\draw (0,0) node{$\cdots$} (0,-2) node{$\cdots$};

%\filldraw (2.1,0) circle (2pt)  (2.1,-2) circle (2pt);

\draw (.7,0) -- (1.4,0) -- (2.1,0) ;
\draw [blue, thick, dotted] (1.4,0)--(1.4,-2);

%\draw (.7,0) node{$\cdots$} (0.7,-2) node{$\cdots$} ;

\draw  (2.5,-2) arc (0:360:2.15cm and .7cm) node[align=center,right]{$Y$};

\draw (1.2,0.3) node[right]{$z$};
\draw (1.2,-2.3) node[right]{$w$};
\draw (-2.5,0) node[right]{$i$};

\draw (-3,-2) node[right]{$n-i$};

\draw (2.2,-1) node[right]{$K_{i,n-i}\setminus \{z,w\}$};

\draw (2.5,0) arc (0:360:2.15cm and .7cm) node[align=center,right]{$X$};

\draw (0.6,-3.2) node{$K^{\mathrm{T}}_{i,n-i}$};

\foreach \x in {-.7,-1.4,.7,1.4,2.1}{

\filldraw (\x,0) circle (2pt)  (\x,-2) circle (2pt);

}

\end{tikzpicture}
 \caption{Graphs $K^{-}_{i,n-i}$ and $K^{\mathrm{T}}_{i,n-i}$}
 \label{K_n}
\end{figure}
\begin{definition}
Let $K_{i,n-i}^{-}$ denote a graph obtained from a complete bipartite graph $K_{i,n-i}$ plus an edge in the class $X$ with $i$ vertices, see Figure \ref{K_n}. 
\end{definition}
\begin{definition}
Let $K_{i,n-i}^{\mathrm{T}}$ denote a graph obtained from a complete bipartite graph $K_{i,n-i}$ minus an edge plus two adjacent edges in the class $X$ with $i$ vertices, one end point of the missing edge is the shared vertex of these two adjacent edges and the other one is in the class $Y$, see Figure \ref{K_n}. 
\end{definition}
\begin{lemma}\label{L1}
Let $G$ be a graph with $n$ vertices and $\left\lfloor\frac{n^2}{4}\right\rfloor+1$ edges, such that $\tau_{\triangle}(G)=1$ and $T(G)\leq n-3$. Then $G$ is one of the following graphs: $K_{\frac{n}{2},\frac{n}{2}}^{-}$, $K_{\frac{n-1}{2},\frac{n+1}{2}}^{-}$, $K_{\frac{n+1}{2},\frac{n-1}{2}}^{-}$ or $K_{\frac{n+1}{2}, \frac{n-1}{2}}^{\mathrm{T}}$.
\end{lemma}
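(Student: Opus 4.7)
The plan is to exploit the triangle-covering vertex $v$ (which exists since $\tau_{\triangle}(G) = 1$) and analyze the triangle-free graph $G' := G - v$. Set $A := N(v)$, $B := V(G) \setminus (\{v\} \cup A)$, and $d := d(v) = |A|$. Every triangle of $G$ passes through $v$ with its other two vertices in $A$, so $T(G) = e(G[A])$, while edge-counting gives $e(G') = \lfloor n^2/4 \rfloor + 1 - d$. Since $G'$ is triangle-free, Mantel's theorem (Theorem~\ref{MAN}) forces $d \geq \lfloor n^2/4 \rfloor + 1 - \lfloor (n-1)^2/4 \rfloor$, i.e., $d \geq n/2 + 1$ for even $n$ and $d \geq (n+1)/2$ for odd $n$.

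Next I would show that $G'$ must be bipartite. Using the folklore bound that a triangle-free non-bipartite graph on $m$ vertices has at most $\lfloor(m-1)^2/4\rfloor + 1$ edges, non-bipartiteness of $G'$ would force $d \geq n - 1$, hence $d = n - 1$, $B = \emptyset$, and $T(G) = e(G') = \lfloor n^2/4 \rfloor + 2 - n$; a direct calculation shows this exceeds $n - 3$ for every $n \geq 5$, contradicting the hypothesis.

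With $G'$ bipartite, fix a bipartition $(P, Q)$ with $p := |P| \leq |Q| =: q$ and $p + q = n - 1$, and set $a := |A \cap P|$, $b := |A \cap Q|$, so $a + b = d$. Since $G' \subseteq K_{P, Q}$ we have $e(G') \leq pq$, which restricts $(p, q)$ to one of a few nearly-balanced shapes. Moreover $T(G)$ equals the number of $G'$-edges between $A \cap P$ and $A \cap Q$, giving $ab - (pq - e(G')) \leq T(G) \leq ab$. The constraints $T(G) \leq n - 3$, $a + b = d$, $a \leq p$, $b \leq q$, and $pq \geq e(G')$ leave only a short list of admissible $(p, q, a, b)$, each reconstructing $G$ as one of the four target graphs: $G' = K_{n/2 - 1, n/2}$ with $\{a, b\} = \{1, n/2\}$ gives $K^{-}_{n/2, n/2}$ (even $n$); $G' = K_{(n-1)/2, (n-1)/2}$ with $\{a, b\} = \{1, (n-1)/2\}$ or $\{2, (n-3)/2\}$ gives $K^{-}_{(n+1)/2, (n-1)/2}$ or $K^{\mathrm{T}}_{(n+1)/2, (n-1)/2}$ (odd $n$); and $G' = K_{(n-3)/2, (n+1)/2}$ with $(a, b) = (1, (n+1)/2)$ gives $K^{-}_{(n-1)/2, (n+1)/2}$ (odd $n$).

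The main obstacle will be the case analysis in the previous paragraph. Whenever $d$ exceeds its minimum value, or the bipartition $(p, q)$ is further from balanced than listed, the minimum of $ab$ subject to $a + b = d$, $a \leq p$, $b \leq q$ (attained at $a = \max(0, d - q)$ or $a = p$) grows roughly like $(d - q + 1) q$, which quickly outpaces the slack $n - 3 + pq - e(G')$. Executing this comparison cleanly, and verifying the few small values of $n$ where the asymptotic inequalities become tight, is where the bulk of the care is needed.
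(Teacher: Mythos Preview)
Your proposal is correct and follows essentially the same strategy as the paper's proof: remove the triangle-covering vertex $v_0$, rule out the non-bipartite case for $G\setminus v_0$ by an edge count (you cite the folklore bound $\lfloor(m-1)^2/4\rfloor+1$, the paper re-derives it via a shortest-odd-cycle argument, but the resulting inequality is identical), and then in the bipartite case analyze how the neighbors of $v_0$ split between the two parts, using $T(G)=d_1d_2$ up to the number of missing bipartite edges. The only organizational difference is that the paper parameterizes by the deficit $t=\lfloor\frac{n-1}{2}\rfloor\lceil\frac{n-1}{2}\rceil-e(G\setminus v_0)$ while you parameterize directly by the part sizes $(p,q)$; both lead to the same short list of surviving configurations.
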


\begin{theorem}\label{T_{n}} Let $G$ be a graph with $n$ vertices and $\left\lfloor\frac{n^2}{4}\right\rfloor+1$ edges, then either $\tau_{\triangle}(G)=1$ or $T(G)\geq n-2$.
\end{theorem}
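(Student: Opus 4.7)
The plan is to prove the contrapositive: assuming $T(G) \leq n - 3$, show that $\tau_{\triangle}(G) = 1$. The strategy is to analyze $G$ via a maximum cut and argue that every configuration compatible with both the edge count $\lfloor n^2/4 \rfloor + 1$ and the bound $T(G) \leq n - 3$ must be one of the four graphs listed in Lemma~\ref{L1}, each of which satisfies $\tau_{\triangle} = 1$. By Theorem~\ref{E1}, $T(G) \geq \lfloor n/2 \rfloor$, so the statement is only non-trivial when $n \geq 6$; the small values of $n$ can be verified directly.

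First I would fix a bipartition $V(G) = X \cup Y$ with $|X| = a$ maximizing $e(X, Y)$, and set $m = e(G) - e(X, Y)$ (the number of non-crossing edges) and $k = a(n-a) - e(X, Y)$ (the number of missing crossing edges). Since $a(n-a) \leq \lfloor n^2/4 \rfloor$, we get $m - k \geq 1$ and in particular $m \geq 1$. The max-cut property $d_Y(v) \geq d_X(v)$ for every $v \in X$ forces the endpoints of each non-crossing edge to retain most of their crossings, so each non-crossing edge $uv \subseteq X$ contributes $|N(u) \cap N(v) \cap Y|$ triangles, a quantity close to $|Y|$.

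The argument then proceeds by cases on $m$. For $m = 1$, necessarily $k = 0$ and $\{a, n-a\} = \{\lfloor n/2 \rfloor, \lceil n/2 \rceil\}$; consequently $G$ equals one of $K^{-}_{n/2, n/2}$, $K^{-}_{(n-1)/2,(n+1)/2}$, or $K^{-}_{(n+1)/2,(n-1)/2}$, each of which satisfies $\tau_{\triangle}(G) = 1$. For $m = 2$ (and hence $k = 1$), I would show that the bound $T(G) \leq n - 3$ forces the two non-crossing edges to share a vertex $z$ and the missing crossing edge to be incident to $z$; this yields $G = K^{\mathrm{T}}_{(n+1)/2,(n-1)/2}$, again with $\tau_{\triangle} = 1$. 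For $m \geq 3$, or for any other $m = 2$ configuration, direct triangle counting combined with the max-cut degree inequalities produces $T(G) > n - 3$, contradicting the hypothesis.

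The main obstacle is the $m = 2$ sub-case in which the two non-crossing edges are either disjoint or share a vertex distinct from an endpoint of the missing crossing edge. Here one must count the triangles through each non-crossing edge, bound possible overlaps, and subtract the triangles lost because of the missing crossing edge, finally showing the total strictly exceeds $n - 3$. The bookkeeping relies on the tight max-cut degree inequalities together with parity of $n$. Once this is handled, the surviving graphs are precisely the four of Lemma~\ref{L1}, all of which have $\tau_{\triangle}(G) = 1$, completing the contrapositive.
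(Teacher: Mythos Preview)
Your approach via a maximum cut is genuinely different from the paper's. The paper proceeds by induction on $n$: it removes an edge $uv$ lying in no triangle (so $u,v$ have no common neighbour), applies the induction hypothesis and Theorem~\ref{E1} to $G\setminus\{u,v\}$, and in the tightest sub-case invokes Lemma~\ref{L1} to pin down $G\setminus\{u,v\}$ exactly before analysing how $u$ and $v$ reattach (via the auxiliary inequality $ab+(A-a)(B-b)\ge\min\{A,B\}$ of Lemma~\ref{com}). Your route instead attacks the structure of $G$ directly through a max-cut and, if it goes through, establishes Lemma~\ref{L1} and Theorem~\ref{T_{n}} simultaneously, without induction.

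Two points where your sketch is loose. First, the inference ``$m=2$ and hence $k=1$'' is not literally correct: for even $n$ one can have $m=2$, $k=0$ with $\{a,n-a\}=\{n/2-1,n/2+1\}$, and this configuration must be ruled out separately (it does give $T(G)\ge n-2$, but you should say so). Second, the $m\ge 3$ case will need more than a sentence. The max-cut degree condition $d_Y(v)\ge d_X(v)$ only yields $|N(u)\cap N(v)\cap Y|\ge d_X(u)+d_X(v)-|Y|$ for an edge $uv\subseteq X$, which is weak when the non-crossing edges are spread thinly; and as $m$ grows so does the allowed $k$, so the part sizes can drift from $\lfloor n/2\rfloor$ and the missing crossing edges can be placed adversarially. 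None of this is fatal, but it is real case analysis that you have not yet done, whereas the paper's inductive reduction sidesteps these headaches at the cost of needing Lemma~\ref{L1} as a separate ingredient.
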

\section{Proofs of the main results}
\begin{proof}[Proof of Lemma \ref{L1}]
Let $v_{0}$ be such a vertex that $G\setminus v_0$ contains no triangle. We distinguish two cases.

\textbf{Case 1}.\ \ $G\setminus v_0$ contains at least one odd cycle. Let $C_{2k+1}$ ($k\geq 2$) be the shortest odd cycle in $G\setminus v_0$ and $G^{'}$ be the graph obtained from $G$ by removing the vertices of $C_{2k+1}$ and $v_{0}$, so $v(G^{'})=n-2k-2$. Since $C_{2k+1}$ is the shortest cycle in $G\setminus v_{0}$, each vertex in $G^{'}$ can be adjacent to at most $2$ vertices in the $C_{2k+1}$, otherwise, we can find a shorter odd cycle. Since $G^{'}$ is an $(n-2k-2)$-vertex triangle-free graph, by Theorem \ref{MAN}, $e(G^{'})\leq \left\lfloor\bigg(\frac{n-2k-2}{2}\bigg)^2\right\rfloor$. Obviously, any two vertices of $C_{2k+1}$ are not adjacent, therefore
\begin{align*} 
e(G\setminus v_{0})&\leq 2k+1+2(n-2k-2)+\left\lfloor\bigg(\frac{n-2k-2}{2}\bigg)^2\right\rfloor\\
&=k^{2}-nk+\left\lfloor \frac{n^2}{4}\right\rfloor+n-2\\
&\leq \left\lfloor\frac{n^2}{4}\right\rfloor-n+2~(k\geq2).
\end{align*}
Since $e(G)=d(v_{0})+e(G^{'})\leq (n-1)+( \left\lfloor\frac{n^2}{4}\right\rfloor-n+2)=\left\lfloor\frac{n^2}{4}\right\rfloor+1$, the only possibility for $e(G)=\left\lfloor\frac{n^2}{4}\right\rfloor+1$ is that $d(v_{0})=n-1$ and $e(G\setminus v_{0})= \left\lfloor\frac{n^2}{4}\right\rfloor-n+2$. In this case, we get $T(G)=\left\lfloor\frac{n^2}{4}\right\rfloor-n+2$, which contradicts $T(G)\leq n-3$.

\textbf{Case 2}.\ \ $G\setminus v_{o}$ has no odd cycles, then $G\setminus v_{0}$ is a bipartite graph and $e(G\setminus v_{0})\leq \left\lfloor\frac{n-1}{2}\right\rfloor\left\lceil\frac{n-1}{2}\right\rceil$. There are two subcases.

\textbf{Case 2.1}.\ \ $e(G\setminus v_{0})= \left\lfloor\frac{n-1}{2}\right\rfloor\left\lceil\frac{n-1}{2}\right\rceil$. Then $G\setminus v_{0}$ is $K_{\left\lfloor\frac{n-1}{2}\right\rfloor,\left\lceil\frac{n-1}{2}\right\rceil}$ and $d(v_{0})=e(G)-e(G\setminus v_{0})=\left\lfloor\frac{n}{2}\right\rfloor$+1. Let $d_1$ and $d_{2}$ be the numbers of neighbors of $v_{0}$ in classes $X$ and $Y$ of $K_{\left\lfloor\frac{n-1}{2}\right\rfloor,\left\lceil\frac{n-1}{2}\right\rceil}$, respectively, then $d(v_{0})=d_{1}+d_{2}$ and $T(G)=d_1d_2$. So we need $d_{1}+d_{2}=\left\lfloor\frac{n}{2}\right\rfloor+1$ and $d_{1}d_{2}\leq n-3$ hold true at the same time. When $n$ is even, we can see that the only solution is when $d_{1}=1$ and $d_{2}=\frac{n}{2}$. The symmetric solution, $d_{1}=\frac{n}{2}$, $d_{2}=1$ is not possible, since $d_{1}\leq \frac{n}{2}-1$ in this case. Therefore, we get that $G$ is $K_{\frac{n}{2},\frac{n}{2}}^{-}$. Assume now that $n$ is odd, there are two possibilities,

$(i)$ $d_{1}=1$ and $d_{2}=\frac{n-1}{2}$, in the same way as in the even case, we get $T(G)=\frac{n-1}{2}$ and $G$ is $K_{\frac{n+1}{2},\frac{n-1}{2}}^{-}$. When $d_{1}=\frac{n-1}{2}$ and $d_{2}=1$, we also get $T(G)=\frac{n-1}{2}$ and $G$ is $K_{\frac{n+1}{2},\frac{n-1}{2}}^{-}$.

$(ii)$ $d_{1}=2$ and $d_{2}=\frac{n-3}{2}$, then $T(G)=2(\frac{n-3}{2})=n-3$ and $G$ is $K_{\frac{n+1}{2},\frac{n-1}{2}}^{\mathrm{T}}$. Similarly, when $d_{1}=\frac{n-3}{2}$ and $d_{2}=2$, we get the same result.

\textbf{Case 2.2 }.\ \ $e(G\setminus v_{0})= \left\lfloor\frac{n-1}{2}\right\rfloor\left\lceil\frac{n-1}{2}\right\rceil-t$. Then $d(v_{0})=\left\lfloor\frac{n}{2}\right\rfloor$+1+t, $1\leq t\leq \left\lceil\frac{n}{2}\right\rceil-2$. Let $G\setminus v_{0}$ be the bipartite graph with partitions $X^{'}$ and $Y^{'}$, where $|X^{'}|=i^{'}$, then we have
$$i^{'}(n-1-i^{'})\geq \left\lfloor\frac{n-1}{2}\right\rfloor\left\lceil\frac{n-1}{2}\right\rceil-t$$
\begin{eqnarray}\label{i}
\Rightarrow \left\{
\begin{aligned}
& \frac{n-1-\sqrt{4t+1}}{2}\leq i^{'}\leq \frac{n-1+\sqrt{4t+1}}{2}, \text{n is even}, \\
& \frac{n-1-2\sqrt{t}}{2}\leq i^{'}\leq \frac{n-1+2\sqrt{t}}{2},  ~\text{n is odd}.
\end{aligned}
\right.\end{eqnarray}

Suppose $v_{0}$ has $d_{1}$ $(\geq 1)$ neighbors in $X^{'}$ and $d_{2}$ $(\geq 1)$ neighbors in $Y^{'}$. Since $G\setminus v_{0}$ is bipartite, if $d_1d_2=0$, then $G$ contains no triangle which contradicts the fact that  $\tau_{\triangle}(G)=1$. In this situation, $d_{1}d_{2}\geq T(G)\geq d_{1}d_{2}-t=d_{1}(\left\lfloor\frac{n}{2}\right\rfloor+1+t-d_{1})-t=-d_{1}^{2}+(\left\lfloor\frac{n}{2}\right\rfloor+1+t)d_{1}-t\geq -d_{1}^{2}+(\left\lfloor\frac{n}{2}\right\rfloor+1)d_{1}$.

When $n$ is even, we know that the solutions of $n-3\geq T(G)=d_1(\frac{n}{2}+1-d_1)$ is exactly one of $d_1=1$ or $d_2=1$ holds like in Case $2.1$. However, when $d_2=1$, since $d_1+d_2=\frac{n}{2}+1+t$, we have $d_1=\frac{n}{2}+t$, which contradicts $(1)$ namely $i'\leq \frac{n-1+\sqrt{4t+1}}{2}$ $(1\leq t\leq \frac{n}{2}-2)$ because $d_1\leq i^{'}$. The case $d_1=1$ and $d_2=\frac{n}{2}+t$ can be settled in the same way. 

When $n$ is odd, $n-3\geq T(G)=d_1(\left\lfloor\frac{n}{2}\right\rfloor+1-d_1)$ implies that one of $d_1=1$, $d_2=1$, $d_1=2$ or $d_2=2$ holds. By symmetry we can consider the cases $d_1=1$ and $d_1=2$. We check the details of the following $3$ subcases.

$(i)$ $t=1$ and $d_{1}=1$. We get $d_{2}=\frac{n+1}{2}$ because $d_1+d_2=\frac{n-1}{2}+1+t$. Since $d_2\leq|Y^{'}|=n-1-i'\leq\frac{n-1+2\sqrt{t}}{2}=\frac{n+1}{2}$, we get $|Y^{'}|=\frac{n+1}{2}$ and $|X^{'}|=\frac{n-3}{2}$. Since $e(G\setminus v_0)=\frac{n-1}{2}\frac{n-1}{2}-1$, we see that $G\setminus v_{0}$ is $K_{\frac{n-3}{2},\frac{n+1}{2}}$. Thus, $G$ is $K_{\frac{n-1}{2},\frac{n+1}{2}}^{-}$ and $T(G)\leq d_{1}d_{2}=\frac{n+1}{2}$.

$(ii)$ $t\geq2$ and $d_1=1$. By $d_1+d_2=\frac{n-1}{2}+1+t$, we have $d_{2}=\frac{n-1}{2}+t> \frac{n-1+2\sqrt{t}}{2}$, which contradicts  $d_2\leq|Y^{'}|=n-1-i'\leq\frac{n-1+2\sqrt{t}}{2}$.

$(iii)$ $t\geq1$ and $d_1=2$.  By $d_1+d_2=\frac{n-1}{2}+1+t$, we have $d_{2}=\frac{n-1}{2}+t-1$. However, $T(G)\geq d_1d_2-t=2(\frac{n-1}{2}+t-1)-t\geq n-2$, which contradicts $T(G)\leq n-3$. 

In conclusion, when $n$ is even, $G$ is $K_{\frac{n}{2},\frac{n}{2}}^{-}$. When $n$ is odd, $G$ is either $K_{\frac{n-1}{2},\frac{n+1}{2}}^{-}$ or $K_{\frac{n+1}{2},\frac{n-1}{2}}^{-}$ or $K_{\frac{n+1}{2},\frac{n-1}{2}}^{\mathrm{T}}$.
\end{proof}
Using Lemma \ref{L1}, we are able to give the proof of Theorem \ref{T_{n}}.
\begin{proof}[Proof of Theorem \ref{T_{n}}]
We prove our result by induction on $n$. The induction step will go from $n-2$ to $n$, so we check the bases when $n=3$ and $n=4$, obviously, our statement is true for these two cases. Suppose Theorem \ref{T_{n}} holds for $k= n-2$ $(n\geq5)$, we separate the rest of the proof into 2 cases.

\textbf{Case 1}.\ \ Every edge in $G$ is contained in at least one triangle. Then $T(G)\geq \left\lceil\frac{\left\lfloor\frac{n^2}{4}\right\rfloor+1}{3}\right\rceil\geq n-2$.

\textbf{Case 2}.\ \ There exists at least one edge $uv$ which is not contained in any triangle. Then $u$ and $v$ cannot have common neighbor in $G\setminus \{u,v\}$, which implies that $e(\{u,v\},G\setminus \{u,v\})\leq n-2$. Therefore, $e(G\setminus \{u,v\})\geq \left\lfloor\frac{n^2}{4}\right\rfloor-(n-2)=\left\lfloor\frac{(n-2)^2}{4}\right\rfloor+1$. In this point, we split the rest of the proof into $3$ subcases.

\textbf{Case 2.1}\ \ $e(G\setminus\{u,v\})\geq \left\lfloor\frac{(n-2)^2}{4}\right\rfloor+3$. By Theorem \ref{E1}, we get $T(G\setminus \{u,v\})\geq 3\left\lfloor \frac{n-2}{2}\right\rfloor$, which implies that $T(G)\geq 3\left\lfloor \frac{n-2}{2}\right\rfloor\geq n-2$.

\textbf{Case 2.2}.\ \ $e(G\setminus\{u,v\})= \left\lfloor\frac{(n-2)^2}{4}\right\rfloor+2$. When $n$ is even, by Theorem \ref{E1}, we get $T(G\setminus \{u,v\})\geq n-2$, since $T(G)\geq T(G\setminus \{u,v\})$, we are done. When $n$ is odd, we have $e(\{u,v\},G\setminus \{u,v\})= n-3$, then there exists $w\in V(G\setminus \{u,v\})$ such that edges $vw, uw\notin E(G)$. If $e(G[N(u)\setminus v])+e(G[N(v)\setminus u])\geq 1$, then the number of triangles which contains $u$ or $v$ is at least $1$. By Theorem \ref{E1}, $T(G\setminus \{u,v\})\geq n-3$ holds, thus, $T(G)\geq n-2$. Otherwise, $G\setminus \{u,v,w\}$ is bipartite and all triangles in $G\setminus \{u,v\}$ are adjacent to $w$. since $e(G[N(u)\setminus v])+e(G[N(v)\setminus u])=0$, no triangle contains $u$ or $v$. Therefore, $\tau_{\triangle}(G)=\tau_{\triangle}(G\setminus \{u,v\})=1$ and all triangles in $G$ are adjacent to $w$.

\textbf{Case 2.3}. \ $e(G\setminus \{u,v\})= \left\lfloor\frac{(n-2)^2}{4}\right\rfloor+1$, then $e(\{u,v\},G\setminus \{u,v\})=n-2$. When $e(G[N(u)\setminus v])+e(G[N(v)\setminus u])=0$,
$G\setminus \{u,v\}$ is bipartite, it has at most $\left\lfloor\frac{(n-2)^2}{4}\right\rfloor$ edges, contradicting the assumption of the case.

Suppose $e(G[N(u)\setminus v])+e(G[N(v)\setminus u])=1$. Since $|N(u)\setminus v~\cup~ N(v)\setminus u|=n-2$, we have $e([N(u)\setminus v],[N(v)\setminus u])\leq \left\lfloor\frac{(n-2)^2}{4}\right\rfloor$. Thus, $e(G\setminus \{u,v\})=\left\lfloor\frac{(n-2)^2}{4}\right\rfloor+1$ implies that $G\setminus \{u,v\}$ is obtained from $K_{\left\lfloor\frac{n-2}{2}\right\rfloor,\left\lceil\frac{n-2}{2}\right\rceil}$ plus an edge, say $\{j,k\}$, in one class. Therefore, all triangles in $G$ contain $\{j,k\}$ and hence $\tau_{\triangle}(G)=1$ follows.

Now we assume that $e(G[N(u)\setminus v])+e(G[N(v)\setminus u])\geq 2$, then the number of the triangles containing $u$ or $v$ is at least 2. It is easy to check that if $v(G)=5$ then $G\setminus \{u,v\}$ is a triangle and either $\tau_{\triangle}(G)=1$ or $T(G)=4$. Therefore, we may assume $n\geq 6$. Since $e(G\setminus \{u,v\})=\left\lfloor\frac{(n-2)^2}{4}\right\rfloor+1$, by the induction hypothesis, either $\tau_{\triangle}(G\setminus \{u,v\})=1$ or $T(G\setminus \{u,v\})\geq n-4$. When $T(G\setminus \{u,v\})\geq n-4$, we have $T(G)\geq T(G\setminus \{u,v\})+2\geq n-2$. Otherwise, $\tau_{\triangle}(G\setminus \{u,v\})=1$ and $T(G\setminus \{u,v\})\leq n-5$ hold.
By Lemma \ref{L1}, we see that when $n$ is even, 
$G\setminus\{u,v\}$ is $K_{\frac{n}{2}-1,\frac{n}{2}-1}^{-}$, when $n$ is odd, $G\setminus\{u,v\}$ is either $K_{\frac{n-3}{2},\frac{n-1}{2}}^{-}$ or $K_{\frac{n-1}{2},\frac{n-3}{2}}^{-}$ or $K_{\frac{n-1}{2},\frac{n-3}{2}}^{\mathrm{T}}$. Let us check what will happen in these cases.

\begin{figure}[ht]
\centering
\begin{tikzpicture}[scale=.95]

\filldraw (-2.2,0.5) circle (2 pt) (-2.2,-2) circle (2 pt);
\draw (0.8,0) node{$\cdots$} (.8,-2) node{$\cdots$};

%\filldraw (2.1,0) circle (2pt)  (2.1,-2) circle (2pt);

\draw (-.6,0) -- (-1.2,0) (-2.2,0.5)--(-2.2,-2);
\draw[red](-.6,0)--(-2.2,0.5)--(-1.2,0) (-2.2,0.5)--(-1.2,-2);
\draw ;

%\draw (.7,0) node{$\cdots$} (0.7,-2) node{$\cdots$} ;

\draw  (2.5,-2) arc (0:360:2.15cm and .7cm) node[align=center,right]{$Y$};

\draw (-1.4,0.1) node[below]{$j$};
\draw (-.7,0) node[below]{$k$};
\draw (-1.4,-2.3) node[right]{$l$};
\draw (-2.5,0.8) node[right]{$u$};

\draw (-2.5,-2.3) node[right]{$v$};

\draw (2.5,0) arc (0:360:2.15cm and .7cm) node[align=center,right]{$X$};

\draw (0.6,-3.4) node{$K_{|X|,|Y|}^{-}$};

\foreach \x in {-.6,-1.2,0,1.3,2.1}{

\filldraw (\x,0) circle (2pt)  (\x,-2) circle (2pt);

}

\end{tikzpicture}\qquad\qquad
\begin{tikzpicture}[scale=.95]

\filldraw (-2.2,0.5) circle (2 pt) (-2.2,-2) circle (2 pt);
\draw (0.8,0) node{$\cdots$} (.8,-2) node{$\cdots$};

%\filldraw (2.1,0) circle (2pt)  (2.1,-2) circle (2pt);

\draw (0,0)--(-.6,0) -- (-1.2,0) (-2.2,0.5)--(-2.2,-2);
\draw[red](-.6,0)--(-2.2,0.5);
\draw ;

%\draw (.7,0) node{$\cdots$} (0.7,-2) node{$\cdots$} ;

\draw  (2.5,-2) arc (0:360:2.15cm and .7cm) node[align=center,right]{$Y$};

\draw (-1.4,0.1) node[below]{$j$};
\draw (-.7,0) node[below]{$z$};
\draw (0,0) node[below]{$k$};
\draw (-.7,-2.3) node[right]{$w$};
\draw (-2.5,0.8) node[right]{$u$};

\draw (-2.5,-2.3) node[right]{$v$};
\draw (2.2,-1) node[right]{$K_{\frac{n-1}{2},\frac{n-3}{2}}\setminus \{z,w\}$};
\draw [blue, thick, dotted] (-.6,0)--(-.6,-2);

\draw (2.5,0) arc (0:360:2.15cm and .7cm) node[align=center,right]{$X$};

\draw (0.6,-3.4) node{$K_{\frac{n-1}{2},\frac{n-3}{2}}^{\mathrm{T}}$};

\foreach \x in {-.6,-1.2,0,1.3,2.1}{

\filldraw (\x,0) circle (2pt)  (\x,-2) circle (2pt);

}

\end{tikzpicture}
 \caption{}
 \label{2}
\end{figure}
We first give the following technical lemma:
\begin{lemma}\label{com}
Let $f(a,b)=ab+(A-a)(B-b)$, where $A$ and $B$ are integers, $1\leq a\leq A$, $1\leq b\leq B$, then $f(a,b)\geq min\{A,B\}$.
\end{lemma}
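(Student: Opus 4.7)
The plan is to reduce Lemma \ref{com} to a short algebraic identity plus one boundary check. Note first that $f(a,b)$, together with the hypotheses $1 \leq a \leq A$, $1 \leq b \leq B$ and the conclusion $f \geq \min\{A,B\}$, is invariant under the simultaneous swap $a \leftrightarrow b$, $A \leftrightarrow B$. So I would assume without loss of generality that $A \leq B$, and aim to prove the stronger-looking statement $f(a,b) \geq A$.

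The main step is the identity
\[
f(a,b) - A = a(b-1) + (A-a)(B-b-1),
\]
which is verified by a one-line expansion. On the subregion $1 \leq a \leq A$, $1 \leq b \leq B-1$, every factor on the right-hand side is nonnegative, so the inequality $f(a,b) \geq A$ follows immediately.

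The only remaining case is the boundary $b = B$, where the factor $B-b-1$ turns negative and the identity no longer carries a useful sign. This case I would handle directly from the definition: $f(a,B) = aB + (A-a)\cdot 0 = aB \geq B \geq A$, using $a \geq 1$ and $A \leq B$. Putting the two cases together gives $f(a,b) \geq A = \min\{A,B\}$.

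I do not expect any serious obstacle; the whole statement is a two-variable extremal calculation, and the only point requiring care is precisely the boundary $b=B$, where the clean identity fails to sign. As an alternative approach, one could simply observe that $f$ is linear in $a$ for fixed $b$ and linear in $b$ for fixed $a$, so its minimum on the rectangle $[1,A] \times [1,B]$ is attained at one of the four corners $(1,1), (1,B), (A,1), (A,B)$, and then check those four values by hand.
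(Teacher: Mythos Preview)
Your proof is correct. The identity $f(a,b)-A = a(b-1)+(A-a)(B-b-1)$ checks out, the sign analysis on $1\le b\le B-1$ is clean, and the boundary case $b=B$ is handled correctly. The symmetry reduction to $A\le B$ is also fine.

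Your argument is genuinely different from the paper's. The paper fixes $b$ and uses that $f$ is linear in $a$ with slope $2b-B$; it then splits according to whether $b\le B/2$ or $b>B/2$, pushes $a$ to the appropriate endpoint $A$ or $1$, and bounds the resulting one-variable expression (getting $Ab\ge A$ in the first case and $b+(A-1)(B-b)\ge B$ in the second). So the paper's proof is in the spirit of your ``alternative approach'' (reduce to boundary points via linearity), though it only reduces in the $a$-variable and then argues directly in $b$. Your main route via the algebraic identity is shorter and avoids the case split on $b$ versus $B/2$ entirely; the price is the one extra boundary check at $b=B$, which is trivial. Either way the content is the same extremal fact, but your identity makes the nonnegativity visible at a glance.
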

\begin{proof}[Proof of Lemma \ref{com}]
Obviously, when $AB=max\{A,B\}$, $f(a,b)\geq 1=min\{A,B\}$. Otherwise, we have $A,B\geq 2$.
Without loss of generality, fix $b$, then $f(a,b)$ is a linear function of variable $a$. Since $\frac{\partial f}{\partial a}=b-(B-b)$, thus, $f(a,b)$ is decreasing when $b<\frac{B}{2}$ and $f(a,b)$ is increasing when $b>\frac{B}{2}$. Therefore,
\begin{equation*}
    f(a,b)\geq
    \begin{cases}f(A,b)=Ab,~~b\leq \frac{B}{2},\\
    f(1,b)=b+(A-1)(B-b),~~b>\frac{B}{2}.
    \end{cases}
\end{equation*}
It is easy to check that $Ab\geq A$, when $b\leq \frac{B}{2}$, and $b+(A-1)(B-b)=B(A-1)+b(2-A)\geq B $ when $b>\frac{B}{2}$. Hence, we get $f(a,b)\geq min\{A,B\}$. Obviously, if $min\{A,B\}=A$, the equality holds only when $a=A$ and $b=1$, if $min\{A,B\}=B$, the equality holds only when $a=1$ and $b=B$.
\end{proof}

\textbf{Case 2.3.1}. \ $G\setminus\{u,v\}$ is $K^{-}_{\left\lfloor\frac{n}{2}\right\rfloor-1,\left\lceil\frac{n}{2}\right\rceil-1}$, which implies that when $n$ is even, $G\setminus\{u,v\}$ is $K^{-}_{\frac{n}{2}-1,\frac{n}{2}-1}$ and when $n$ is odd, $G\setminus\{u,v\}$ is $K^{-}_{\frac{n-3}{2},\frac{n-1}{2}}$. Let $X$ and $Y$ be the two classes of $K^{-}_{\left\lfloor\frac{n}{2}\right\rfloor-1,\left\lceil\frac{n}{2}\right\rceil-1}$ and $\{j,k\}$ be the extra edge in $X$, where $|X|=\left\lfloor\frac{n}{2}\right\rfloor-1$, see Figure $2$. Since $e(G[N(u)\setminus v])+e(G[N(v)\setminus u])\geq 2$, $\mid N(u)\setminus v \cup N(v)\setminus u\mid=n-2$ and $N(u)\setminus v\cap N(v)\setminus u=\emptyset$, we see that either $N(u)\setminus v$ or $N(v)\setminus u$ contains at least one vertex in both classes $X$ and $Y$. Without loss of generality, say at least $N(u)\setminus v$ has this property.

Let $|N(u)\setminus v~\cap X|=a$ and $|N(u)\setminus v~\cap Y|=b$, where $1\leq a\leq \left\lfloor\frac{n}{2}\right\rfloor-1$ and $1\leq b\leq \left\lceil\frac{n}{2}\right\rceil-1$. Then the number of triangles which are adjacent to $u$, containing one vertex in $X$ and one in $Y$ is $ab$ while the number of triangles which are adjacent to $v$, containing one vertex in $X$ and one in $Y$ is $(A-a)(B-b)$. Hence, we get $T(G)\geq ab+\bigg(\left\lfloor\frac{n}{2}\right\rfloor-1-a\bigg)\bigg(\left\lceil\frac{n}{2}\right\rceil-1-b\bigg)+\left\lceil\frac{n}{2}\right\rceil-1$. By Lemma \ref{com}, we see $T(G)\geq \left\lfloor\frac{n}{2}\right\rfloor-1+\left\lceil\frac{n}{2}\right\rceil-1=n-2$.

\textbf{Case 2.3.2}. $n$ is odd and $G\setminus\{u,v\}$ is $K_{\frac{n-1}{2},\frac{n-3}{2}}^{-}$.  Let $X$ and $Y$ be the two classes of $K^{-}_{\frac{n-1}{2},\frac{n-3}{2}}$ and $\{j,k\}$ be the extra edge in $X$, where $|X|=\frac{n-1}{2}$. Similarly as in the previous case, either $N(u)\setminus v$ or $N(v)\setminus u$ contains at least one vertex in both classes $X$ and $Y$. Without loss of generality, say at least $N(u)\setminus v$ has this property.

Let $|N(u)\setminus v~\cap X|=a$ and $|N(u)\setminus v~\cap Y|=b$, where $1\leq a\leq \frac{n-1}{2}$ and $1\leq b\leq \frac{n-3}{2}$, then $T(G)\geq ab+\bigg(\frac{n-1}{2}-a\bigg)\bigg(\frac{n-3}{2}-b\bigg)+\frac{n-3}{2}$. By Lemma \ref{com}, we get $T(G)\geq \frac{n-3}{2}+\frac{n-3}{2}\geq n-3$, the equality holds only if $a=1$ and $b=\frac{n-3}{2}$. Let $s\in X$ and $\{u,s\}\in E(G)$, $a=1$ and $b=\frac{n-3}{2}$ implies that either $s\in \{j,k\}$ then $\tau_{\triangle}(G)=1$, or $s\notin\{j,k\}$ then there exists one more triangle $\{v,j,k\}$, thus $T(G)\geq n-3+1=n-2$.

\iffalse
First assume that $j,k,l\in N(u)\setminus v$, where $l\in Y$, then $2\leq a\leq \frac{n-1}{2}$ and $1\leq b\leq \frac{n-3}{2}$. We get $T(G)=ab+(\frac{n-1}{2}-a)(\frac{n-3}{2}-b)+\frac{n-3}{2}+1\geq n-1$, the equality holds only if $a=\frac{n-1}{2}$ and $b=1$ which implies that $N(u)\setminus v=X\cup l$ and $N(v)\setminus u=Y\setminus l$.

Assume next that $j,l\in N(u)\setminus v$ and $k\not\in N(u)\setminus v$, then $1\leq a\leq \frac{n-1}{2}-1$ and $1\leq b\leq\frac{n-3}{2}$. 
In this case, when $a=1$, $b=\frac{n-3}{2}$, which implies that $N(u)\setminus v=Y\cup j$ and $N(v)\setminus u=X\setminus j$, we get $\tau_{\triangle}(G)=1$. Otherwise, $T(G)=ab+(\frac{n-1}{2}-a)(\frac{n-3}{2}-b)+\frac{n-3}{2}\geq n-3+\frac{n-5}{2}\geq n-2$ $(n\geq 7)$. The equality holds only if $a=\frac{n-1}{2}-1$, $b=1$ which implies that $N(u)\setminus v=l\cup X\setminus k$ and $N(v)\setminus u=k\cup Y\setminus l$. Here, when $n=5$, $|X|=2$ and all triangles in $G$ are adjacent to $j$, thus $\tau_{\triangle}(G)=1$. 

Now we may assume that $N(u)\setminus v=Y\cup X'$
and $N(v)\setminus u= X-X'$, where $|X'|\geq 1$ and $\{j,k\}\notin X'\subsetneqq X$. Otherwise we have $j,k,l\in N(v)\setminus u$, where $l\setminus Y$ and we are done.
Then we have $1\leq a\leq \frac{n-1}{2}-2$ and $1\leq b\leq \frac{n-3}{2}$. Therefore $T(G)=ab+(\frac{n-1}{2}-a)(\frac{n-3}{2}-b)+\frac{n-3}{2}+1\geq n-2$, the equality holds only if $a=1$ and $b=\frac{n-3}{2}$.
\fi

\textbf{Case 2.3.3}. $n$ is odd and $G\setminus\{u,v\}$ is $K_{\frac{n-1}{2},\frac{n-3}{2}}^{\mathrm{T}}$. Since $\frac{n-1}{2}\geq 3$, we get $n\geq 7$. Let $X$ and $Y$ be the classes of $K_{\frac{n-1}{2},\frac{n-3}{2}}^{\mathrm{T}}$, $\{j,z\}$ and $\{z,k\}$ be the two extra edges in $X$ and $\{z,w\}$ be the missing edge in $K_{\frac{n-1}{2},\frac{n-3}{2}}$, see Figure \ref{2}.

Let $|N(u)\setminus v~\cap X|=a$ and $|N(u)\setminus v~\cap Y|=b$. Since  $\mid N(u)\setminus v \cup N(v)\setminus u\mid=n-2$ and $N(u)\setminus v\cap N(v)\setminus u=\emptyset$, when $a=0$, we have $X\subseteq N(v)\setminus u$. If $N(v)\setminus u=X$, clearly, all triangles in $G$ contain $z$ and hence $\tau_{\triangle}(G)=1$. Otherwise, $|(N(v)\setminus u)\cap Y|\geq 1$. It is easy to check that $T( K_{\frac{n-1}{2},\frac{n-3}{2}}^{\mathrm{T}})=n-5$, therefore, in this case we get $T(G)\geq n-5+2+\frac{n-1}{2}-1\geq n-1$ $(n\geq 7)$.
When $b=0$, then $Y\subseteq N(v)\setminus u$. If $N(v)\setminus u=Y$ then $N(u)\setminus v=X$, we see that all triangles in $G$ contain $z$ and hence $\tau_{\triangle}(G)=1$. Otherwise, $|(N(v)\setminus u)\cap X|\geq 1$. When $|(N(v)\setminus u)\cap X|=1$, if $(N(v)\setminus u)\cap X=\{z\} $, obviously, all triangles in $G$ contain $z$, hence $\tau_{\triangle}(G)=1$. If not, then clearly $T(G)\geq n-5+1+\frac{n-3}{2}\geq n-2$ $(n\geq 7)$. It is easy to check that $T(G)$ reaches the lower bound when $|(N(v)\setminus u)\cap X|=1$ for $n\geq 9$ and when $n=7$, $T(G)\geq 5$ holds in all cases. Therefore, we get either $\tau_{\triangle}(G)=1$ or $T(G)\geq n-2$.

 Now suppose that, $1\leq a\leq \frac{n-1}{2}$ and $1\leq b\leq \frac{n-3}{2}$. Then $T(G)\geq ab+(\frac{n-1}{2}-a)(\frac{n-3}{2}-b)+n-5$, by Lemma \ref{com}, we get $T(G)\geq \frac{n-3}{2}+n-5\geq n-2~(n\geq 9)$. Since $T(G)\geq 5$ when $n=7$, we see that $T(G)\geq n-2$ holds in this case.
  %By Lemma \ref{com}, we know that $ab+(\frac{n-1}{2}-a)(\frac{n-3}{2}-b)$ reaches the lower bound when $a=1$ and $b=\frac{n-3}{2}$. Let $s\in X$ and $\{u,s\}\in E(G)$, if $s=z$ then all triangles in $G$ contain $z$ and hence $\tau_{\triangle}(G)=1$. If $s\neq z$, by Lemma \ref{com}, we get $T(G)\geq \frac{n-3}{2}+1+n-5\geq n-2$ $(n\geq 7)$, here we add $1$ since there exists at least one more triangle $\{v,j,z\}$ or $\{v,z,k\}$.
 
 %then $T(G)\geq ab+\bigg(\frac{n-1}{2}-a\bigg)\bigg(\frac{n-3}{2}-b\bigg)-1+(n-5)$. Here we subtracted $1$ since $\{z,w\}$ might be contained in $N(u)\setminus v$ or $N(v)\setminus u$ and $\{z,w\}\notin E(G)$. By Lemma \ref{com}, $T(G)\geq \frac{n-3}{2}+n-6\geq \frac{3n-15}{2}\geq n-2~(n\geq 11)$. However, when $n=7,9$, it can be easily checked that in both cases either all triangles in $G$ contain $z$ which implies that $\tau_{\triangle}(G)=1$ or $T(G)\geq n-2$.

\iffalse
Similarly to the previous $2$ cases, since $e(G[N(u)\setminus v])+e(G[N(v)\setminus u])\geq 2$, $\mid N(u)\setminus v \cup N(v)\setminus u\mid=n-2$, $N(u)\setminus v\cap N(v)\setminus u=\emptyset$ and it is easy to check that $T( K_{\frac{n-1}{2},\frac{n-3}{2}}^{\mathrm{T}})=n-5$, the number of triangles which contains $u$ or $v$ is larger than 4 which implies that $T(G)>n-1$ except the case when $\tau_{\triangle}(G)=1$, all triangles in $G$ are adjacent to $z$, $N(u)\setminus v=X$ and $N(v)\setminus u=Y$.
\fi
This completes the proof.
\end{proof}

\section{Open problems}
Let $V_1, V_2, \ldots , V_r$ be pairwise disjoint sets where $ \left\lceil \frac{n}{2}\right\rceil \geq |V_1|\geq |V_2|\geq \ldots \geq |V_r|\geq \left\lfloor\frac{n}{2}\right\rfloor$  and $\sum |V_i|=n$ hold. Define the graph $T_r(n)$ with vertex set $\cup V_i$ where $\{ u,v\}$ is an edge
if $u\in V_i$, $v\in V_j (i\not=j)$, but there is no edge within a $V_i$. The number of edges of the graph $T_r(n)$ is denoted by $t_r(n)$. The following fundamental theorem of Tur\'an is a generalization of Mantel's theorem.

\begin{theorem} [\textbf{Tur\'an}\cite{TUR}]\label{TUR}
 If a graph on $n$ vertices has more than $t_{k-1}(n)$ edges then it contains a copy of the complete graph $K_k$ as a subgraph.
\end{theorem}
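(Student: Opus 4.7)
The plan is to prove Tur\'an's theorem by induction on $n$, shrinking by $k-1$ vertices at each inductive step. The base cases $n < k$ are vacuous, since no graph on fewer than $k$ vertices can contain a $K_k$ and $t_{k-1}(n) = \binom{n}{2}$ in that range, so the stated bound is automatic.

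For the inductive step, let $G$ be a $K_k$-free graph on $n \geq k$ vertices; the goal is to show $e(G) \leq t_{k-1}(n)$. Passing to an edge-maximal $K_k$-free supergraph of $G$ on the same vertex set only increases the edge count, so we may assume $G$ itself is edge-maximal $K_k$-free. Such a $G$ must contain a copy of $K_{k-1}$: for $n \geq k$ there is at least one non-edge $\{x,y\}$ in $G$ (otherwise $G = K_n \supseteq K_k$), and by maximality adding $\{x,y\}$ creates a $K_k$, so $x$ and $y$ share $k-2$ common neighbors that already induce a $K_{k-2}$ in $G$; together with $x$ they form a $K_{k-1}$ inside $G$.

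Fix such a $K_{k-1}$ on vertex set $A$, and let $B = V(G) \setminus A$, so $|B| = n-k+1$. Split $e(G) = e_A + e_{AB} + e_B$ according to the location of edges. Then $e_A = \binom{k-1}{2}$ exactly; each $b \in B$ has at most $k-2$ neighbors in $A$, otherwise $A \cup \{b\}$ would host a $K_k$, so $e_{AB} \leq (k-2)(n-k+1)$; and $G[B]$ is $K_k$-free on $n-k+1$ vertices, so the induction hypothesis yields $e_B \leq t_{k-1}(n-k+1)$. Adding these three contributions,
\[ e(G) \;\leq\; \binom{k-1}{2} + (k-2)(n-k+1) + t_{k-1}(n-k+1). \]

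The only remaining obstacle is the combinatorial identity
\[ \binom{k-1}{2} + (k-2)(n-k+1) + t_{k-1}(n-k+1) \;=\; t_{k-1}(n), \]
which, once verified, closes the induction. The verification proceeds by taking $T_{k-1}(n)$ and deleting one vertex from each of its $k-1$ classes: the condition that the class sizes differ by at most one is preserved under a uniform decrement, so the residual graph is exactly $T_{k-1}(n-k+1)$, and the lost edges decompose as $\binom{k-1}{2}$ among the deleted vertices (they lie in distinct classes, hence induce a $K_{k-1}$) plus $(k-2)(n-k+1)$ between deleted and surviving vertices, matching the claimed identity uniformly in the residue of $n$ modulo $k-1$. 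This bookkeeping is the main point where care is required; the rest of the argument is routine.
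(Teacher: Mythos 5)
Your proof is correct, but there is nothing in the paper to compare it against: Theorem \ref{TUR} is stated only as a cited classical result (Tur\'an \cite{TUR}) and the paper supplies no proof of it. What you have written is essentially Tur\'an's original argument --- strong induction on $n$ with step size $k-1$, passing to an edge-maximal $K_k$-free graph to extract a $K_{k-1}$ on a set $A$, bounding the three edge classes by $\binom{k-1}{2}$, $(k-2)(n-k+1)$ and $t_{k-1}(n-k+1)$ respectively, and closing with the identity $t_{k-1}(n)=t_{k-1}(n-k+1)+\binom{k-1}{2}+(k-2)(n-k+1)$, verified by deleting one vertex from each class of $T_{k-1}(n)$. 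All steps check out, including the base cases $n<k$ where $t_{k-1}(n)=\binom{n}{2}$ makes the claim vacuous, and the observation that uniform decrement of the class sizes preserves balancedness so the residual graph really is $T_{k-1}(n-k+1)$. One cosmetic remark: the paper's displayed definition of $T_r(n)$ contains a typo (the balancedness condition is written with $\lceil n/2\rceil$ and $\lfloor n/2\rfloor$ instead of $\lceil n/r\rceil$ and $\lfloor n/r\rfloor$); your argument correctly uses the intended balanced $(k-1)$-partite Tur\'an graph, which is the standard meaning and the one the paper clearly intends.
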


The most natural construction is to add one edge to $T_{k-1}(n)$ in the set $V_1$. This graph is denoted by
$T_{k-1}^{\--}(n)$. It contains not only one copy of $K_k$ but $|V_2|\cdot |V_3|\cdots |V_{k-1}|$ of them.
\cite{E3} proved that this is the least number. Observe that the intersection of all of these copies of $K_k$
is a pair of vertices (in $V_1$). If this is excluded, the number of copies probably increases. This is expressed
by the following conjecture. Take $T_{k-1}(n)$, add an edge $\{ x,y\}$ in $V_1$, an edge $\{ u,v\}$ in $V_2$ and delete the edge $\{u,x\}$. This graph is denoted by $T_{k-1}^{\sqsubset}$. It contains almost the double of the number of copies of $K_k$ in $T_{k-1}^{\--}(n)$.

\begin{conjecture} If a graph on $n$ vertices has $t_{k-1}(n)+1$ edges and the copies of $K_k$ have an empty intersection then the number of copies of $K_k$ is at least as many as in   $T_{k-1}^{\sqsubset}$: $\ (|V_2|-1) |V_3|\cdot |V_4|\cdots |V_{k-1}|+(|V_1|-1) |V_3|\cdot |V_4|\cdots |V_{k-1}|=(|V_1|+|V_2|-2)|V_3|\cdot |V_4|\cdots |V_{k-1}|$.
\end{conjecture}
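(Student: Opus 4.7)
The plan is to extend the proof of Theorem \ref{T_{n}} to general $k$ by induction on $n$, mirroring the two-step structure used there: first a structural lemma identifying ``near-extremal'' graphs with a single dominant vertex, then a case analysis for the main bound. Write $B(n)=(|V_1|+|V_2|-2)|V_3|\cdots|V_{k-1}|$ for the conjectured target, with $|V_i|$ the part sizes of $T_{k-1}(n)$.

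The first step is to prove a structural analogue of Lemma \ref{L1}: if $G$ has $n$ vertices and $t_{k-1}(n)+1$ edges, if a single vertex $v_0$ is contained in every copy of $K_k$, and if the number of $K_k$'s is strictly less than $B(n)$, then $G$ must be one of a short explicit list of graphs --- essentially $T_{k-1}^{-}(n)$ (with the extra edge placed in any class whose size permits it), together with one or two ``T-shape'' deformations that arise in off-balance cases when $n\not\equiv 0\pmod{k-1}$. The idea follows Case 2 of Lemma \ref{L1}: since $G\setminus v_0$ is $K_k$-free, Tur\'an's Theorem \ref{TUR} bounds $e(G\setminus v_0)\leq t_{k-1}(n-1)$; one then analyzes the distribution of the $d(v_0)$ neighbors of $v_0$ among the $k-1$ classes of the (near-)Tur\'an structure on $G\setminus v_0$, and uses that the number of $K_k$'s through $v_0$ equals the number of $K_{k-1}$'s in $G[N(v_0)]$, a product over the part sizes --- bounded by a convexity argument below something much larger than $B(n)$ unless the neighbors are arranged in the special configurations described.

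For the main induction, the base cases are small $n$ (at least $n\geq k$ and a handful of sizes depending on $k$), and the inductive step goes from $n-2$ to $n$. In Case 1, every edge of $G$ lies in some $K_k$; a direct double-count $\binom{k}{2}T_{K_k}(G)\geq t_{k-1}(n)+1$ gives a weak lower bound that should then be strengthened by the theorem of Lov\'asz--Simonovits \cite{LS} (or its $K_k$-counterpart) to reach $B(n)$. In Case 2, there exists an edge $uv$ lying in no $K_k$, so $N(u)\cap N(v)$ contains no $K_{k-2}$; Tur\'an's theorem applied one level down bounds $e(\{u,v\},V(G)\setminus\{u,v\})$, forcing $e(G\setminus\{u,v\})\geq t_{k-1}(n-2)+1$. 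If the excess of edges over $t_{k-1}(n-2)$ is large, Lov\'asz--Simonovits again gives enough copies in $G\setminus\{u,v\}$; if the excess is exactly $1$, either the induction hypothesis on $G\setminus\{u,v\}$ produces at least $B(n-2)$ copies of $K_k$ and the copies through $u$ or $v$ close the gap to $B(n)$; or $\tau_{K_k}(G\setminus\{u,v\})=1$ and the structural lemma pins down $G\setminus\{u,v\}$ as one of the listed graphs, after which one counts $K_k$'s in $G$ directly using a multidimensional generalization of Lemma \ref{com}.

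The main obstacles I anticipate are two. First, establishing the correct generalization of Lemma \ref{com}: a bound of the form $\prod_{i=1}^{k-1}a_i+\prod_{i=1}^{k-1}(A_i-a_i)\geq \min_j\prod_{i\neq j}A_i$ (or a similar product lower bound) subject to $1\leq a_i\leq A_i$; this is plausible by a Schur-convexity or successive fixing-one-variable argument, but the attainment patterns at the minimum must be identified precisely to match the conjectured extremal graph $T_{k-1}^{\sqsubset}$. Second, the structural lemma becomes considerably more intricate for $k\geq 4$: the extra edge can sit in various classes, the ``T-shape'' variant generalizes to several inequivalent deformations depending on the residue of $n$ mod $k-1$, and verifying that each such candidate either yields $\geq B(n)$ copies of $K_k$ or admits a single common vertex across all $K_k$'s will be the most labor-intensive ingredient. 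I expect this structural classification, rather than the convexity inequality or the top-level case split, to be the true bottleneck of the program.
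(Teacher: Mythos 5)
This statement is Conjecture~1 of the paper; the authors explicitly state they were unable to prove it and offer no proof, so there is nothing to compare your argument against. More importantly, what you have written is a research programme, not a proof: both of its essential ingredients (the structural classification generalizing Lemma~\ref{L1} and the product inequality generalizing Lemma~\ref{com}) are left unproven, and you say so yourself. A proposal that defers its two acknowledged ``main obstacles'' cannot be accepted as a proof of an open conjecture.

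Beyond the admitted gaps, two steps of the skeleton demonstrably fail for $k\geq 4$. First, in your Case~2 you claim that an edge $uv$ lying in no $K_k$ forces $e(G\setminus\{u,v\})\geq t_{k-1}(n-2)+1$. For triangles this works because ``$uv$ in no triangle'' means $N(u)\cap N(v)=\emptyset$, hence $e(\{u,v\},G\setminus\{u,v\})\leq n-2=t_2(n)-t_2(n-2)-1$. For $k\geq 4$ the hypothesis only says $G[N(u)\cap N(v)]$ contains no $K_{k-2}$, which bounds the edges \emph{inside} the common neighborhood but not its size; $d(u)+d(v)$ can be close to $2(n-2)$, so the required inequality $e(\{u,v\},G\setminus\{u,v\})\leq t_{k-1}(n)-t_{k-1}(n-2)-1$ does not follow and the whole $n\to n-2$ induction loses its entry point. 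Second, your Case~1 cannot close: the double count gives only $\Theta(n^2)$ copies while the target $(|V_1|+|V_2|-2)|V_3|\cdots|V_{k-1}|=\Theta\bigl(n^{k-2}\bigr)$, and even the Lov\'asz--Simonovits bound for $t_{k-1}(n)+1$ edges is the count in $T_{k-1}^{-}(n)$, namely $|V_2||V_3|\cdots|V_{k-1}|$, which is only about half of the conjectured bound. So in Case~1 you must somehow exploit the empty-intersection hypothesis, which your plan never does there. Until these points are repaired and the two generalized lemmas are actually proved, the conjecture remains open.
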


Of course this would be a generalization of our Theorem 4. Now we try to generalize it in a different direction.
What is the minimum number of triangles in an $n$-vertex graph $G$ containing $\left\lfloor \frac{n^{2}}{4}\right\rfloor+t$ edges if $\tau_{\triangle}(G)\geq s$ is also supposed. The problem is interesting only when $0<t<s$. Otherwise, if
$t\geq s$ then $\tau_{\triangle}(G)=t$ is allowed. By Lov\'asz-Simonovits' theorem \cite{LS}, we know that the number of triangles is
at least $t\left\lfloor \frac{n}{2}\right\rfloor$ with equality for the following graph. Take $K_{\left\lceil \frac{n}{2}\right\rceil, \left\lfloor \frac{n}{2}\right\rfloor}$ where the two parts are $V_1 (|V_1|=\left\lceil \frac{n}{2}\right\rceil)$ and $V_2 (|V_2|=\left\lfloor\frac{n}{2}\right\rfloor)$, respectively. Add $t$ edges to $V_1$. Here all triangles contain one of the new added edges, therefore $\tau_{\triangle}(G)\leq t$ and the extra condition on $\tau_{\triangle}(G)$ is not a real restriction.

 Hence we may suppose $0<t<s$. Choose $2(s-1)$ distinct vertices in $V_1$ (of $K_{\left\lceil \frac{n}{2}\right\rceil, \left\lfloor\frac{n}{2}\right\rfloor}$): $x_1, x_2, \ldots , x_{s-1}, y_1, y_2, \ldots , y_{s-1}$ and two distinct vertices in $V_2: u_1, u_2$. Add the edges $\{x_1, y_1\}$, $\{x_2, y_2\}, \ldots , \{x_{s-1}, y_{s-1}\}, \{u_1, u_2\}$ to $K_{\left\lceil \frac{n}{2}\right\rceil, \left\lfloor\frac{n}{2}\right\rfloor}$ and delete the edges $\{ x_1 ,u_1\}, \ldots, \{ x_{s-t}, u_1\}$. Let $K_{\left\lceil \frac{n}{2}\right\rceil, \left\lfloor \frac{n}{2}\right\rfloor}^{s,t}$ denote this graph. It is easy to see that it contains $\left\lfloor\frac{n^{2}}{4} \right\rfloor+t$ edges. On the other hand it contains $s$
vertex disjoint triangles if $\left\lceil \frac{n}{2}\right\rceil \geq 2(s-1)+1$ and $ \left\lfloor \frac{n}{2}\right\rfloor\geq s+1$. Therefore, $\tau_{\triangle}(K_{\left\lceil \frac{n}{2}\right\rceil, \left\lfloor \frac{n}{2}\right\rfloor}^{s,t})=s$ holds if $n$ is large enough. We believe that this is the best possible construction.

\begin{conjecture}
 Suppose that the graph $G$ has $n$ vertices and $\left\lfloor \frac{n^{2}}{4} \right\rfloor+t$ edges, it satisfies
 $\tau_{\triangle}(G)\geq s$ and $n\geq n(t,s)$ is large. Then $G$ contains at least as many triangles as
$K_{\left\lceil\frac{n}{2}\right\rceil, \left\lfloor\frac{n}{2}\right\rfloor}^{s,t}$ has, namely
$(s-1)\left\lfloor \frac{n}{2}\right\rfloor+\left\lceil\frac{n}{2}\right\rceil -2(s-t)$.
\end{conjecture}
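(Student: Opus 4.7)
The plan is to prove the conjecture by combining a stability argument with a finite combinatorial optimization, using induction on $s$ with base case $s=2,\ t=1$ provided by Theorem \ref{T_{n}}. The key heuristic is that the conjectured extremal count $(s-1)\lfloor n/2\rfloor+\lceil n/2\rceil-2(s-t)$ exceeds the Lov\'asz--Simonovits bound $t\lfloor n/2\rfloor$ by roughly $(s-t)\cdot n/2$, so any counterexample $G$ must be \emph{very} close to being extremal for Lov\'asz--Simonovits. This should pin down its structure up to bounded perturbation.

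First, I would establish a stability reduction. Assume $G$ has $\lfloor n^2/4\rfloor+t$ edges, $\tau_{\triangle}(G)\ge s$, and $T(G)$ less than the conjectured value. Since $T(G)=O_{s,t}(n)$ and Lov\'asz--Simonovits is tight only on graphs obtained from the balanced complete bipartite graph by adding $t$ edges inside one class, a quantitative stability version (in the spirit of Erd\H os--Simonovits, or a direct removal argument iterating Theorem \ref{MAN}) should yield a decomposition $G=(K_{\lceil n/2\rceil,\lfloor n/2\rfloor}\cup A)\setminus B$ where $A$ is a set of edges inside the parts, $B$ is a set of absent cross-edges, $|A|-|B|=t$, and $|A|+|B|\le C(s,t)$ is bounded independently of $n$.

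Next, I would count triangles from this decomposition. Every triangle uses at least one edge of $A$; for $e=\{x,y\}\in A$ lying in part $V_i$, it contributes one triangle per common cross-neighbour, i.e. $|V_{3-i}|-|B_{x,y}|$ triangles, where $B_{x,y}\subseteq B$ are the missing edges at $x$ or $y$. Extra triangles may arise from interactions between an $A$-edge in $V_1$ and an $A$-edge in $V_2$. This gives an explicit lower bound for $T(G)$ in terms of $A$, $B$, and the bipartition. Then I would translate $\tau_{\triangle}(G)\ge s$ into a combinatorial constraint on $(A,B)$: the triangles created must not admit a cover by $s-1$ vertices. In the extremal construction, this dispersal condition is realized by $s-1$ disjoint edges in one class, one edge in the other, and $s-t$ carefully chosen deletions.

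Finally I would solve the resulting finite optimization. Because $|A|+|B|$ is bounded, minimizing $T(G)$ subject to $|A|-|B|=t$, $\tau_{\triangle}\ge s$, and the structural constraints becomes a case analysis — analogous to but more elaborate than Cases 2.3.1--2.3.3 of the proof of Theorem \ref{T_{n}} — and Lemma \ref{com} (or its obvious generalization to products of several linear factors) should handle the convexity inputs. The main obstacle is the stability step: standard stability gives an edit-distance bound of $o(n^2)$, whereas the optimization needs a bound of $O_{s,t}(1)$. Upgrading this to an exact-stability statement (perhaps by iteratively symmetrizing, or by a Zykov-type shift on vertices of maximum degree in $G[V_i]$) is likely the technical heart of the argument. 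A secondary difficulty is ruling out asymmetric configurations in Step~4 where $A$-edges are concentrated in the smaller part, or where $B$ is spread across several $V_1$-edges rather than forming a star at one $u_1\in V_2$; these require comparing several candidate configurations against $K^{s,t}_{\lceil n/2\rceil,\lfloor n/2\rfloor}$ and using $n\ge n(s,t)$ to dominate lower-order terms.
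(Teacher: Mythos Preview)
The statement you are addressing is not a theorem in the paper but Conjecture~2, explicitly posed as an open problem; the authors write that they ``were not able to prove these generalizations.'' Consequently there is no proof in the paper to compare against, and your proposal must be judged on its own merits as an attack on an open problem.

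Your outline is a reasonable strategy, but it is not a proof, and the gap you yourself flag is decisive. The reduction hinges on the claim that if $e(G)=\lfloor n^2/4\rfloor+t$ and $T(G)\le C(s,t)\,n$ then $G$ is within edit distance $O_{s,t}(1)$ of the balanced complete bipartite graph. Standard Erd\H{o}s--Simonovits stability only yields edit distance $o(n^2)$, and even refined supersaturation or the sharp form of Lov\'asz--Simonovits does not obviously deliver an $O(1)$ bound; one can imagine (and must rule out) configurations sitting on a complete bipartite graph with parts of sizes $\lceil n/2\rceil\pm k$ together with $\Theta(k^2)$ internal edges and deletions, for $k$ growing slowly with $n$, which would already place the finite optimization out of reach. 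Until this exact-stability step is actually established, the remaining plan (the triangle count from the decomposition and the bounded case analysis) cannot be executed.

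Two further issues deserve attention. First, your proposed induction on $s$ with base case $s=2,\ t=1$ does not cover the range $0<t<s$: for each $t\ge 2$ the natural base of an induction on $s$ is $s=t+1$, which is not supplied by Theorem~\ref{T_{n}}. Second, even granting the $O_{s,t}(1)$ edit distance, translating $\tau_{\triangle}(G)\ge s$ into a workable combinatorial constraint on the pair $(A,B)$ is nontrivial: the triangle hypergraph on the $A$-edges depends on the precise incidence pattern of $B$, and showing that the optimum is attained when $B$ forms a star at a single vertex of $V_2$ (as in $K^{s,t}_{\lceil n/2\rceil,\lfloor n/2\rfloor}$) rather than some more spread configuration requires an argument you have not supplied. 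As it stands, the proposal is a plausible programme, not a proof.
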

 In the case $t=1, s=2$ our Theorem 4 is obtained. There is an obvious common generalization of our two conjectures.

\vskip 10mm
{\Large \noindent \bf Acknowledgments}\ \

We thank Jimeng Xiao for his suggestions to improve Lemma \ref{L1}.


\begin{thebibliography}{99}

\bibitem{E1}
P. Erd\H os, Some Theorems on graphs, Riveon Lematematika (in Hebrew with English summary) 9 (1955), 13--17. 

\bibitem{E2}
P. Erd\H os, On a Theorem of Rademacher-Tur\'{a}n, Illinois J. Math. 6 (1962), 122--127. 

\bibitem{E3}
P. Erd\H os, On the number of complete subgraphs contained in certain graphs, Publ. Math. Inst. Hungar. Acad. Sci. 7 (1962), 459--464.

\bibitem{LS}
L. Lov\'asz and M. Simonovits, On the number of complete subgraphs of a graph \RNum{2}, Studies in Pure Mathematics: To the Memory of Paul Tur\'an, Birkh\"{a}user, Basel, (1983), 459--495.

\bibitem{MAN}
W. Mantel: Problem 28, soln. by H. Gouventak, W. Mantel, J. Teixeira de Mattes, F. Schuh and W.A. Wythoff. Wiskundige Opgaven 10 (1907), 60--61. 

\bibitem{TUR}
P. Tur\'an. On an external problem in graph theory. Mat. Fiz. Lapok (in Hungarian). 48 (1941), 436--452.

\end{thebibliography}
\end{document}